\newtheorem{theorem}{Theorem}[section]
\newtheorem{lemma}[theorem]{Lemma}
\newtheorem{corollary}[theorem]{Corollary}
\newtheorem{prop}{Proposition}[section]
\newtheorem{rema}[prop]{Remark}
\makeatletter \@addtoreset{equation}{section} \makeatother
\def\ddt{\frac{d}{dt}}
\def\ppt{\frac{\partial}{\partial t}}
\begin{document}

\title{Differential Harnack Estimates for
Time-dependent Heat Equations with Potentials}

\author{Xiaodong Cao $^*$ }
\thanks{$^*$ Research
partially supported by the Jeffrey Sean Lehman Fund from Cornell
University}

\address{Department of Mathematics,
 Cornell University, Ithaca, NY 14853}
\email{cao@math.cornell.edu}

\author{Richard S. Hamilton
}

\address{Department of Mathematics,
 Columbia University, New York, NY 10027}
\email{hamilton@math.columbia.edu}

\renewcommand{\subjclassname}{%
  \textup{2000} Mathematics Subject Classification}
\subjclass[2000]{Primary 53C44}

\date{May 8th, 2008}

\maketitle

\markboth{Xiaodong Cao, Richard S. Hamilton } {Differential
Harnack Estimates for Time-dependent Heat Equations with
Potentials}

\begin{abstract}  In this paper, we prove a differential Harnack
inequality for positive solutions of time-dependent heat equations
with potentials. We also prove a gradient estimate for the
positive solution of the time-dependent heat equation.
\end{abstract}

\section{\textbf{Introduction}}
In this paper, we will study time-dependent heat equations with
potentials on closed Riemannian manifolds evolving by the Ricci
flow $$\frac{\partial g_{ij}}{\partial t}=-2R_{ij}.$$ We will
derive differential Harnack inequalities (also known as Li-Yau
type Harnack estimates) for positive solutions of parabolic
equations of the type
$$\frac{\partial f}{\partial t}=\triangle_{g(t)} f+Rf,$$ where
$\triangle_{g(t)}$ depends on time $t$, R is the scalar curvature
of $g(t)$.

The study of differential Harnack estimates for parabolic
equations originated in P. Li and S.-T. Yau's paper \cite{ly86},
in which they proved a differential Harnack inequality for
positive solutions of the heat equation on Riemannian manifolds
with a fixed metric. Namely they proved that, if $f$ is a positive
solution to the heat equation $$\frac{\partial f}{\partial
t}=\triangle f$$ on a Riemannian manifold with nonnegative Ricci
curvature, then
$$\ppt \ln f-|\nabla \ln f|^2+\frac{n}{2t}=\triangle \ln
f+\frac{n}{2t}\geq 0.$$ The idea was later brought to study
general geometric evolution equations by the second author. The
differential Harnack estimates have become an important technique
in the studies of geometric flows.

In \cite{harnack}, the second author proved  a Harnack estimate
for the Ricci flow on Riemannian manifolds with weakly positive
curvature operator, its trace version,
\begin{equation}\label{tharnack}
\frac{\partial R}{\partial t}+\frac{R}{t}+2\nabla R \cdot V+2Rc(V,
V)\geq 0
\end{equation} (here $V$ is any vector field), will be needed in our
proof. B. Chow and S.-C. Chu \cite{chowchu95} gave a nice
geometric interpretation by showing that the Harnack quantity is
the curvature of a degenerate metric in space-time. In the case of
surface, Harnack estimates have been proved by the second author
in \cite{Hsurface} with positive scalar curvature and by B. Chow
in \cite{chow91} with arbitrary curvature, respectively. B. Chow
and the second author generalized their results for the heat
equation and for the Ricci flow on surfaces in
\cite{chowhamilton97}.

 The second author  proved
 a matrix Harnack estimate
for the heat equation in \cite{hmatrix93}. The fundamental
solution and Harnack inequality of time-dependent heat equation
have also been studied by C. Guenther \cite{guenther02}. In
\cite{caoni04}, H.-D. Cao and L. Ni proved a matrix Harnack
estimate for the heat equation on K\"{a}hler manifolds. Harnack
inequalities have also been discovered for other geometric flows.
The second author proved a Harnack estimate for the mean curvature
flow in \cite{mcf95}. B. Chow proved Harnack estimates for
Gaussian curvature flow in \cite{chowgcf91} and for Yamabe flow in
\cite{chowyf92}. For the K\"{a}hler-Ricci flow, H.-D. Cao proved a
Harnack estimate in \cite{caohd92} and L. Ni proved a matrix
Harnack estimate in \cite{ni07}. In \cite{andrews94}, B. Andrews
obtained Harnack inequalities for various evolving hypersurfaces.
In \cite{perelman1}, G. Perelman proved a Harnack estimate for the
fundamental solution of the conjugate heat equation under the
Ricci flow. More precisely, let $(M, g(t))$, $t\in [0, T]$,
 be a solution
to the Ricci flow on a closed manifold, $f$ be the positive
fundamental solution to the conjugate heat equation $$\ppt
f=-\triangle f+Rf,$$ $\tau=T-t$ and $u=-\ln f-\frac{n}{2}\ln (4\pi
\tau)$. Then on $(0,T)$, G. Perelman proved that
$$2\triangle u-|\nabla u|^2+R+\frac{u}{\tau}-\frac{n}{\tau}\leq
0$$ (see \cite{ni06} or \cite[Chapter 16]{chowetc2} for a detailed
proof).

In the present paper, let $(M,g(t))$, $t\in [0, T)$, be a solution
to the Ricci flow on a closed manifold, $f$ be a positive solution
of the time-dependent heat equation with potential, i.e.,
\begin{align}
  \frac{\partial g_{ij}}{\partial t}=& -2R_{ij}, \label{rf} \\
  \frac{\partial f}{\partial t}=&\triangle_{g(t)} f+Rf. \label{heat}
\end{align}
Notice that under the Ricci flow, we have
\begin{equation}
\ddt \int_M f
d\mu=0.
\end{equation}
 We also assume that $g(0)$ has weakly positive curvature
operator, this property is preserved by the Ricci flow (see
\cite{HPCO}). Our first main theorem is the following,
\begin{theorem}\label{theorem1.1} Let $(M, g(t))$, $t\in [0, T)$,
 be a solution
to the Ricci flow (\ref{rf}) on a closed manifold, and suppose
that $g(t)$ has weakly positive curvature operator. Let $f$ be a
positive solution to the heat equation (\ref{heat}), $u=-\ln f$
and $$H=2 \triangle u-|\nabla u|^2-3R-2\frac{n}{t}.$$ Then for all
time $t\in (0,T)$
$$H\leq 0.$$
\end{theorem}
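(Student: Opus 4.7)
The plan is to establish $H \le 0$ via the parabolic maximum principle for the drift heat operator $L := \partial_t - \triangle + 2\langle \nabla u, \nabla \cdot \rangle$. Setting $u = -\ln f$ in (\ref{heat}) gives the companion equation
$$\partial_t u = \triangle u - |\nabla u|^2 - R.$$
Since the $-2n/t$ in $H$ drives $H \to -\infty$ as $t \to 0^+$ on the closed manifold, a first time at which $H$ touches a (possibly $\epsilon$-perturbed) barrier can be isolated, and at that interior maximum point one has $\partial_t H \ge 0$, $\nabla H = 0$, $\triangle H \le 0$, hence $LH \ge 0$. The whole proof therefore reduces to showing the opposite strict inequality $LH < 0$ whenever $H \ge 0$ at an interior point.

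For the evolution calculation I would use the Bochner formula $\triangle|\nabla u|^2 = 2|\nabla^2 u|^2 + 2\langle \nabla u, \nabla \triangle u\rangle + 2R_{ij}u^i u^j$, the Ricci-flow commutators $\partial_t \triangle u = \triangle \partial_t u + 2R^{ij}u_{ij}$ and $\partial_t |\nabla u|^2 = 2R^{ij}u_i u_j + 2\langle \nabla u, \nabla \partial_t u\rangle$, and the scalar evolution $\partial_t R = \triangle R + 2|Rc|^2$. The drift coefficient $2$ in $L$ is dictated by the requirement that the troublesome $\langle \nabla u, \nabla \triangle u\rangle$ and $u^i u^j u_{ij}$ contributions produced by $-\triangle|\nabla u|^2$, by $-2\langle \nabla u, \nabla \partial_t u\rangle$, and by $2\langle \nabla u, \nabla H\rangle$ cancel exactly, leaving
$$LH = -2|\nabla^2 u|^2 + 4R^{ij}u_{ij} - 4R^{ij}u_i u_j - 6|Rc|^2 - 4\langle \nabla u, \nabla R\rangle - 2\triangle R + \frac{2n}{t^2}.$$

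Next I would feed in the trace Harnack (\ref{tharnack}) with $V = \nabla u$, rewritten as
$$-4\langle \nabla u, \nabla R\rangle \;\le\; 2\triangle R + 4|Rc|^2 + \frac{2R}{t} + 4R_{ij}u^i u^j,$$
which makes the $\triangle R$, the $R^{ij}u_i u_j$, and two units of the $|Rc|^2$ contributions cancel, leaving the clean square identity
$$LH \;\le\; -2\bigl|\nabla^2 u - Rc\bigr|^2 + \frac{2R}{t} + \frac{2n}{t^2}.$$
Cauchy--Schwarz on symmetric two-tensors gives $|\nabla^2 u - Rc|^2 \ge (\triangle u - R)^2/n$, and the definition of $H$ rewrites $\triangle u - R = \tfrac12 H + \tfrac12(|\nabla u|^2 + R) + n/t$, so that $H \ge 0$ forces $\triangle u - R \ge \tfrac12(|\nabla u|^2 + R) + n/t$. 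Squaring this lower bound and substituting absorbs the $\tfrac{2n}{t^2}$ and $\tfrac{2R}{t}$ terms exactly and leaves
$$LH \;\le\; -\frac{1}{2n}\bigl(|\nabla u|^2 + R\bigr)^2 - \frac{2|\nabla u|^2}{t} \;\le\; 0,$$
which, made strict by a standard $\epsilon$-barrier, contradicts $LH \ge 0$ at the first maximum.

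The main obstacle I anticipate is engineering the three independent choices---the drift coefficient $2$ in $L$, the vector field $V = \nabla u$ in (\ref{tharnack}), and the tensorial Cauchy--Schwarz $|\nabla^2 u - Rc|^2 \ge (\triangle u - R)^2/n$---so that they interlock to form the perfect square $-2|\nabla^2 u - Rc|^2$ and to absorb the positive $\tfrac{2n}{t^2}$ and $\tfrac{2R}{t}$ terms exactly; any other coefficient in the definition of $H$ would break this algebra. The weak positivity of the curvature operator (preserved by the Ricci flow) enters twice: to license the trace Harnack (\ref{tharnack}), and to guarantee $R, Rc \ge 0$, which in turn is what makes $|\nabla u|^2 + R \ge 0$ and $R/t \ge 0$, i.e.\ makes the final signs in the estimate consistent.
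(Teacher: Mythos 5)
Your proposal is correct, and it is essentially the paper's own argument: your formula for $LH$ with $L=\partial_t-\triangle+2\nabla u\cdot\nabla$ is exactly the specialization of the paper's Lemma 2.1 (Corollary 2.2, with $\alpha=2$, $\beta=1$, $a=-3$, $c=-1$), and you close with the same two inputs, namely the trace Harnack inequality (\ref{tharnack}) applied with $V=\nabla u$ and the maximum principle starting from $H<0$ for small $t$. The only divergence is the endgame: the paper completes the square as $-2\bigl|u_{ij}-R_{ij}-\tfrac1t g_{ij}\bigr|^2$ (i.e.\ takes $\lambda=2$), which automatically generates the terms $-\tfrac2t H-\tfrac2t|\nabla u|^2$ and lets the maximum principle close cleanly, since the reaction term is linear in $H$ and no strictness at the touching point is needed; you instead complete only $-2|u_{ij}-R_{ij}|^2$ and then use the trace Cauchy--Schwarz bound $|u_{ij}-R_{ij}|^2\ge(\triangle u-R)^2/n$ together with the identity $\triangle u-R=\tfrac12H+\tfrac12(|\nabla u|^2+R)+\tfrac nt$ and $R\ge0$ to absorb $\tfrac{2R}{t}+\tfrac{2n}{t^2}$ under the hypothesis $H\ge0$. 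Your absorption computation checks out, and the $\epsilon$-barrier you gesture at does work (shift by a constant $\epsilon$; at the touching point $H=\epsilon$ produces the extra $-\tfrac{2\epsilon}{t}$ giving strictness), but note that this step is genuinely needed in your version, since your final bound is only $\le 0$ and can vanish when $\nabla u=0$ and $R=0$; the paper's choice of completing the square with the $\tfrac1t g_{ij}$ term is precisely what avoids this extra maneuver.
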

We shall also prove the following theorem,
\begin{theorem}\label{theorem1.2}Let $(M, g(t))$, $t\in [0, T)$,
 be a solution
to the Ricci flow (\ref{rf}) on a closed manifold, and suppose
that $g(t)$ has weakly positive curvature operator. Let $f$ be a
positive solution to the heat equation (\ref{heat}), $v=-\ln
f-\frac{n}{2} \ln (4\pi t)$ and $$P=2 \triangle v-|\nabla
v|^2-3R+\frac{v}{t}-d\frac{n}{t},$$ where $d$ is any constant.
Then for all time $t\in (0,T)$, we have \begin{align*} \ppt
(tP)=&\triangle (tP)-2\nabla (tP) \cdot \nabla
v-2t|v_{ij}-R_{ij}-\frac{1}{2t}g_{ij}|^2 \\& -2t(\triangle
R+2|Rc|^2+\frac{R}{t} +2 \nabla R \cdot \nabla v +2 R_{ij}v_iv_j).
\end{align*} Moreover, $\max tP$ is
non-increasing.
\end{theorem}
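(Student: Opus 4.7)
The plan is to derive the stated identity by brute computation of $\partial_t(tP)$, reorganize the result so that the diffusion-drift operator $\Delta - 2\nabla v \cdot \nabla$ becomes visible, complete the square in the Hessian of $v$, the Ricci tensor, and $g/(2t)$, and finally recognize the remaining bracket as the Hamilton trace-Harnack quantity.

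First I would compute $\partial_t v$. Since $f = (4\pi t)^{-n/2} e^{-v}$ solves (\ref{heat}) and $\Delta f / f = |\nabla v|^2 - \Delta v$, one finds
\begin{equation*}
\partial_t v = \Delta v - |\nabla v|^2 - R - \frac{n}{2t}.
\end{equation*}
Then I would assemble $\partial_t P$ piece by piece. Under the Ricci flow one has the standard commutators $\partial_t(\Delta v) = \Delta(\partial_t v) + 2R^{ij}v_{ij}$ and $\partial_t|\nabla v|^2 = 2R^{ij}v_iv_j + 2\langle\nabla v,\nabla\partial_t v\rangle$, together with $\partial_t R = \Delta R + 2|Rc|^2$. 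Substituting these and the expression for $\partial_t v$ yields $\partial_t P$ as an explicit long sum. In parallel I would expand $\Delta P - 2\nabla P \cdot \nabla v$. After subtraction the $\Delta^2 v$ terms cancel, and the Bochner identity
\begin{equation*}
\Delta|\nabla v|^2 = 2|v_{ij}|^2 + 2\langle \nabla v, \nabla\Delta v\rangle + 2R_{ij}v^iv^j
\end{equation*}
eliminates the remaining $\nabla\Delta v$ terms while producing both a $|v_{ij}|^2$ contribution and one $R_{ij}v^iv^j$ error.

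The pivotal algebraic step is to expand
\begin{equation*}
\bigl|v_{ij} - R_{ij} - \tfrac{1}{2t} g_{ij}\bigr|^2 = |v_{ij}|^2 + |Rc|^2 + \tfrac{n}{4t^2} - 2R^{ij}v_{ij} - \tfrac{1}{t}\Delta v + \tfrac{R}{t},
\end{equation*}
which absorbs exactly the mixed $R^{ij}v_{ij}$, $\Delta v/t$, $R/t$, and $n/(4t^2)$ pieces that have appeared. Subtracting $-2|v_{ij} - R_{ij} - \tfrac{1}{2t}g_{ij}|^2$ from $\partial_t P - \Delta P + 2\nabla P \cdot \nabla v$ leaves precisely $-P/t$ together with $-2(\Delta R + 2|Rc|^2 + R/t + 2\nabla R \cdot \nabla v + 2R_{ij} v_i v_j)$. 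Multiplying through by $t$ and using $\partial_t(tP) = P + t\partial_t P$, $\Delta(tP) = t\Delta P$, $\nabla(tP) = t\nabla P$ gives the identity in the theorem.

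For the monotonicity statement, both non-gradient terms on the right-hand side are non-positive. The square is manifestly so, while the bracket is $\partial_t R + R/t + 2\nabla R \cdot V + 2Rc(V,V)$ with $V = \nabla v$, which is non-negative by Hamilton's trace Harnack estimate (\ref{tharnack}), applicable because the weakly positive curvature operator hypothesis on $g(0)$ is preserved under the flow. Hence $\partial_t(tP) \leq \Delta(tP) - 2\nabla(tP) \cdot \nabla v$, and the scalar maximum principle on the closed manifold $M$ forces $\max_M tP$ to be non-increasing. The main obstacle is the bookkeeping itself: keeping track of the many first- and second-order terms produced by $\partial_t P$ and $\Delta P$, and, crucially, guessing the ansatz $|v_{ij} - R_{ij} - g_{ij}/(2t)|^2$ for the square, which is natural in hindsight because it vanishes on gradient shrinking Ricci solitons.
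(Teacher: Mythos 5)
Your proposal is correct and follows essentially the same route as the paper: the paper computes the evolution of $P$ via a general lemma with free constants $\alpha, a, b, c, d, \lambda$ (specialized to $\alpha=2$, $a=-3$, $b=c=-1$, $\lambda=1$), completing the square in $|v_{ij}-R_{ij}-\tfrac{1}{2t}g_{ij}|^2$ and isolating the trace Harnack bracket exactly as you do, then concludes with Hamilton's trace Harnack estimate and the maximum principle. Your direct computation with the specific constants, including the formula for $\partial_t v$, the commutator and Bochner identities, and the identification of the leftover terms as $-P/t$ plus the Harnack quantity, reproduces the paper's Corollary 3.2 and its consequence verbatim.
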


\begin{rema}
In \cite{caoche}, the first author uses a similar method and
proves a differential Harnack inequality for all positive
solutions of the conjugate heat equation under the Ricci flow,
notice that Perelman's Harnack inequality is only valid for the
fundamental solution, while the estimate in \cite{caoche} has no
such restriction (but requires nonnegative scalar curvature). Such
Harnack inequality for the conjugate heat equation has also been
proved by S. Kuang and Q. Zhang \cite{kz07} recently.
\end{rema}

As one can see that our equation (1.3) is corresponding to the
heat equation under the Ricci flow, in the sense of (1.4).
Li-Yau's Harnack inequality gives an estimate on the heat kernel.
The Harnack estimate of the Ricci flow on surfaces gives a control
on curvature growth. The general Harnack estimate of the Ricci
flow allows one to classify the ancient solutions of nonnegative
curvature operators. Perelman's Harnack inequality proves
noncollapsing under the Ricci flow. We expect our Harnack estimate
to have similar applications in the study of the Ricci flow. There
are geometric quantities which satisfy equation (1.3), for
example, the scalar curvature on surfaces. Hence our Harnack
estimate should lead more information of the blow-up. We also
expect this estimate will be useful in understanding the Ricci
solitons, as the soliton potential satisfies a heat-type equation.
More importantly, the method used here will help us searching for
more interesting Harnack inequalities in the Ricci flow as well as
in other geometric flows (for example, see \cite{caoche}).

The rest of this paper is organized as follows. In section 2, we
will first derive a general evolution formula for function $H$,
then we give the proof of Theorem \ref{theorem1.1}. We will prove
a integral version of the Harnack inequality (Theorem \ref{intH}).
Finally, we reprove some early results on surfaces. In section 3,
We will derive the general evolution formula for function $P$,
then we prove Theorem \ref{theorem1.2}. In section 4, we will
define two functionals which are associated to the above two
Harnack quantities, and show that they are monotone under the
Ricci flow. Finally, in section 5, as a special case of the
general evolution formula, we will prove a gradient estimate for
positive and bounded solutions to the heat equation under the
Ricci flow.

{\bf Acknowledgement:} The first author thanks Laurent
Saloff-Coste, for many helpful discussion on various aspects of
Harnack inequalities, and for pointing out the reference
\cite{sturm96} to him.

\section{\textbf{Proof of Theorem \ref{theorem1.1}}}

Let us consider positive solutions of
$$\ppt f=\triangle f
-cRf$$ for all constant $c$ which we will fix later. Let
$f=e^{-u}$, then $\ln f=-u$. We have
$$\ppt \ln f=-\ppt u,$$ and
$$\nabla \ln f=-\nabla u, ~\triangle \ln f=-\triangle u.$$
Hence $u$ satisfies the following equation,
\begin{equation}\label{eqnu}
\ppt u=\triangle u-|\nabla u|^2+cR.
\end{equation}

\begin{lemma}\label{lemma:H} Let $(M, g(t))$ be a solution to
the Ricci flow, and
$u$ satisfies (\ref{eqnu}). Let $$H=\alpha \triangle
u-\beta|\nabla u|^2+aR-b\frac{u}{t}-d\frac{n}{t},$$ where
$\alpha$, $\beta$, $a$, $b$ and $d$ are constants that we will
pick later. Then $H$ satisfies the following evolution equation,
\begin{align*}
\ppt H=&\triangle H-2\nabla H \cdot \nabla
u-(2\alpha-2\beta)|u_{ij}-\frac{\alpha}{2\alpha
-2\beta}R_{ij}-\frac{\lambda}{2t}g_{ij}|^2
-\frac{2\alpha-2\beta}{\alpha}\frac{\lambda}{t}H\\&
+(2\alpha-2\beta)\frac{n}{4t^2}\lambda^2
-(b+\frac{2\alpha-2\beta}{\alpha} \lambda \beta)\frac{|\nabla
u|^2}{t}+(1-\frac{2\alpha-2\beta}{\alpha}
\lambda)b\frac{u}{t^2}\\&+(1-\frac{2\alpha-2\beta}{\alpha}
\lambda)d\frac{n}{t^2} +\alpha c \triangle
R+(2a+\frac{\alpha^2}{2\alpha-2\beta})|Rc|^2\\&+(\alpha \lambda
+a\frac{2\alpha-2\beta}{\alpha} \lambda-bc)\frac{R}{t}-2\alpha
R_{ij}u_iu_j+2(a-\beta c) \nabla R \cdot \nabla u,
\end{align*} where $\lambda$ is also a constant that we will pick
later.
\end{lemma}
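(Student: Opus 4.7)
The plan is a direct, if lengthy, computation: differentiate $H$ in $t$ using the Ricci flow $\partial_t g_{ij}=-2R_{ij}$ together with the evolution (\ref{eqnu}) of $u$, compute $\Delta H$, subtract, and reorganize. First I would record the standard commutator facts under Ricci flow for a scalar $u$: the identity $\partial_t(\Delta u)=\Delta(\partial_t u)+2R_{ij}u_{ij}$ (the cross term from $\partial_t \Gamma^k_{ij}$ dies by contracted Bianchi), the formula $\partial_t|\nabla u|^2=2R_{ij}u_iu_j+2\nabla u\cdot\nabla\partial_t u$, and $\partial_t R=\Delta R+2|Rc|^2$. Substituting $\partial_t u=\Delta u-|\nabla u|^2+cR$ into these gives explicit expressions for $\partial_t(\alpha\Delta u)$, $\partial_t(\beta|\nabla u|^2)$, $\partial_t(aR)$, $\partial_t(bu/t)$, $\partial_t(dn/t)$.

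Second I would apply Bochner's formula $\Delta|\nabla u|^2=2|u_{ij}|^2+2\nabla u\cdot\nabla\Delta u+2R_{ij}u_iu_j$ to unpack the $-\alpha\Delta|\nabla u|^2$ that arises from $\alpha\partial_t\Delta u$, and dispose of the remaining first-order terms $-2\alpha\nabla u\cdot\nabla\Delta u$ and $+2\beta\nabla u\cdot\nabla|\nabla u|^2$ by introducing $-2\nabla H\cdot\nabla u$ on the right side (this precisely matches those two pieces modulo terms in $\nabla u\cdot\nabla R$ and $|\nabla u|^2/t$ that will be collected separately). What remains up to this point is a polynomial in $|u_{ij}|^2$, $R_{ij}u_{ij}$, $R_{ij}u_iu_j$, $\Delta u$, $|Rc|^2$, $\Delta R$, $\nabla R\cdot\nabla u$, $R/t$, $|\nabla u|^2/t$, $u/t^2$, and $n/t^2$.

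Third and crucially, I would complete the square
\[
(2\alpha-2\beta)\Bigl|u_{ij}-\tfrac{\alpha}{2\alpha-2\beta}R_{ij}-\tfrac{\lambda}{2t}g_{ij}\Bigr|^2,
\]
which absorbs both the $-(2\alpha-2\beta)|u_{ij}|^2$ and the $+2\alpha R_{ij}u_{ij}$ while introducing a $-(2\alpha-2\beta)(\lambda/t)\Delta u$, together with the $|Rc|^2$, $R/t$, and $n/t^2$ byproducts visible in the target formula. Finally I would kill the lingering $\Delta u$ using the definition $\alpha\Delta u=H+\beta|\nabla u|^2-aR+bu/t+dn/t$, which is what introduces the $-\tfrac{2\alpha-2\beta}{\alpha}\tfrac{\lambda}{t}H$ term and the $(1-\tfrac{2\alpha-2\beta}{\alpha}\lambda)$ coefficients in front of $bu/t^2$ and $dn/t^2$. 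The main obstacle here is not conceptual depth but bookkeeping: six free parameters $\alpha,\beta,a,b,d,\lambda$ must be tracked through sign-sensitive cancellations, and the safest strategy is to tabulate the $\partial_t(\cdot)-\Delta(\cdot)$ contribution of each of the five summands of $H$ separately before the square is completed.
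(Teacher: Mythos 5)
Your proposal is correct and follows essentially the same route as the paper's proof: substitute the evolution of $u$ into the commutator identities for $\partial_t(\triangle u)$, $\partial_t|\nabla u|^2$ and $\partial_t R$ under Ricci flow, use Bochner's identity to produce the $-(2\alpha-2\beta)|u_{ij}|^2$ term, absorb the first-order terms into $-2\nabla H\cdot\nabla u$, complete the square in $u_{ij}$ with the $\lambda/(2t)$ shift, and eliminate the leftover $\triangle u$ via the definition of $H$. The only cosmetic difference is the direction in which Bochner is applied (you expand $\triangle|\nabla u|^2$, the paper rewrites $2\nabla u\cdot\nabla\triangle u+2Rc(\nabla u,\nabla u)$ as $\triangle|\nabla u|^2-2|\nabla\nabla u|^2$), which is the same identity and yields identical bookkeeping.
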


\begin{proof} The proof follows from a direct computation. We
first calculate the first two terms in $H$,
$$\ppt (\triangle u)=\triangle (\triangle u)-\triangle (|\nabla
u|^2)+c\triangle R+2R_{ij}u_{ij},$$ and
\begin{align*}
\ppt |\nabla u|^2=&2\nabla u \cdot \nabla \triangle u+2Rc(\nabla
u, \nabla u)-2\nabla u\cdot \nabla (|\nabla u|^2)+2c\nabla u
\cdot \nabla R\\
=&\triangle (|\nabla u|^2)-2|\nabla \nabla u|^2-2\nabla u \cdot
\nabla (|\nabla u|^2)+2c\nabla u \cdot \nabla R,
\end{align*}
here we used $$\triangle (|\nabla u|^2)=2\nabla u \cdot \triangle
\nabla u+2|\nabla \nabla u|^2,$$ and $$\triangle \nabla u=\nabla
\triangle u+Rc(\nabla u,\cdot).$$ Using the evolution equation of
$R$,
$$\ppt R=\triangle R + 2|Rc|^2,$$ and (\ref{eqnu}), we have
\begin{align*} \ppt H=&\triangle H -\alpha \triangle (|\nabla
u|^2)+2\alpha R_{ij}u_{ij}+2\beta|\nabla \nabla u|^2+2\beta \nabla
u \cdot \nabla (|\nabla u|^2)\\&+ 2a|Rc|^2+b\frac{|\nabla
u|^2}{t}+d\frac{n}{t^2}+b\frac{u}{t^2}+\alpha c \triangle R-2\beta
c\nabla u
\cdot \nabla R-b\frac{cR}{t}\\
=&\triangle H-2\nabla H \cdot \nabla u+2(a-\beta c) \nabla R \cdot
\nabla u -b\frac{|\nabla u|^2}{t}-2\alpha
R_{ij}u_iu_j\\&-(2\alpha-2\beta)|\nabla \nabla u|^2+ 2\alpha
R_{ij}u_{ij}+2a|Rc|^2+b\frac{u}{t^2}+d\frac{n}{t^2}
+\alpha c \triangle R-b\frac{cR}{t}\\
=&\triangle H-2\nabla H \cdot \nabla u+2(a-\beta c) \nabla R \cdot
\nabla u-2(\alpha-\beta)|u_{ij}-\frac{\alpha R_{ij}}{2(\alpha
-\beta)}-\frac{\lambda g_{ij}}{2t}|^2\\&
-2(\alpha-\beta)\frac{\lambda}{t} (\triangle u-\frac{\alpha
R}{2(\alpha-\beta)})+(\alpha-\beta)\frac{n \lambda^2}{2t^2}
+(2a+\frac{\alpha^2}{2(\alpha-\beta)})|Rc|^2 \\&-b\frac{|\nabla
u|^2}{t}-2\alpha R_{ij}u_iu_j+b\frac{u}{t^2}+d\frac{n}{t^2}
+\alpha
c \triangle R-b\frac{cR}{t}\\
=&\triangle H-2\nabla H \cdot \nabla u+2(a-\beta c) \nabla R \cdot
\nabla u-2(\alpha-\beta)|u_{ij}-\frac{\alpha R_{ij}}{2(\alpha
-\beta)}-\frac{\lambda g_{ij}}{2t}|^2
\\&-\frac{2(\alpha-\beta)}{\alpha}\frac{\lambda}{t}H +(\alpha
\lambda +a\frac{2(\alpha-\beta)}{\alpha}
\lambda-bc)\frac{R}{t}+(\alpha-\beta)\frac{n \lambda^2}{2t^2}\\&
+(2a+\frac{\alpha^2}{2(\alpha-\beta)})|Rc|^2
-(b+\frac{2(\alpha-\beta)}{\alpha} \lambda \beta)\frac{|\nabla
u|^2}{t}-2\alpha R_{ij}u_iu_j\\&+(1-\frac{2(\alpha-\beta)}{\alpha}
\lambda)b\frac{u}{t^2}+(1-\frac{2(\alpha-\beta)}{\alpha}
\lambda)d\frac{n}{t^2} +\alpha c \triangle R.
\end{align*}
\end{proof}

In the above lemma, let us take $\alpha=2$, $\beta=1$, $a=-3$,
$c=-1$, $\lambda=2$, $b=0$, $d=2$. As a consequence of the above
lemma, we have
\begin{corollary} Let $(M, g(t))$ be a solution to the
Ricci flow, $f$ be a positive solution of
$$\ppt f=\triangle f
+Rf,$$ let $u=-\ln f$ and \begin{equation} \label{defH} H=2
\triangle u-|\nabla u|^2-3R-2\frac{n}{t}. \end{equation} Then we
have
\begin{align}\label{harnack}
\ppt H=&\triangle H-2\nabla H \cdot \nabla
u-2|u_{ij}-R_{ij}-\frac{1}{t}g_{ij}|^2 -\frac{2}{t}H
-\frac{2}{t}|\nabla u|^2\\ \nonumber &-2 \triangle R -4|Rc|^2
-2\frac{R}{t}-4 \nabla R \cdot \nabla u-4 R_{ij}u_iu_j\\ \nonumber
=&\triangle H-2\nabla H \cdot \nabla
u-2|u_{ij}-R_{ij}-\frac{1}{t}g_{ij}|^2 -\frac{2}{t}H
-\frac{2}{t}|\nabla u|^2\\ \nonumber &-2(\ppt R +\frac{R}{t}+2
\nabla R \cdot \nabla u+2 R_{ij}u_iu_j).
\end{align}
\end{corollary}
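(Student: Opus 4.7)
The plan is direct specialization of the general formula in Lemma \ref{lemma:H}. The heat equation (\ref{heat}) reads $\ppt f = \triangle f + Rf$, so when written in the form $\ppt f = \triangle f - cRf$ used by the lemma we must take $c = -1$. Matching
$H = 2\triangle u - |\nabla u|^2 - 3R - 2n/t$
against the template $H = \alpha \triangle u - \beta|\nabla u|^2 + aR - bu/t - dn/t$ pins down $\alpha = 2$, $\beta = 1$, $a = -3$, $b = 0$, $d = 2$. The free parameter $\lambda$ is still at our disposal, and I would set $\lambda = 2$ so that the Hessian--curvature square term appearing in the lemma becomes exactly $-2\bigl|u_{ij} - R_{ij} - g_{ij}/t\bigr|^2$.

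With these choices I would substitute into each line of the evolution equation of Lemma \ref{lemma:H} and simplify coefficient by coefficient. Key sanity checks: $2\alpha - 2\beta = 2$, $\alpha/(2\alpha-2\beta) = 1$, and $(2\alpha-2\beta)\lambda/\alpha = 2$, so the coefficient of $H/t$ is $-2/t$. The purely numerical piece $(2\alpha-2\beta)\lambda^2 n/(4t^2) = 2n/t^2$ cancels against $(1 - (2\alpha-2\beta)\lambda/\alpha)\,d\,n/t^2 = -2n/t^2$, which is the reason no bare $n/t^2$ survives. All three terms proportional to $b$ drop out automatically.

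The remaining work is to compute the six coefficients in front of $|\nabla u|^2/t$, $|Rc|^2$, $R/t$, $\triangle R$, $\nabla R \cdot \nabla u$, and $R_{ij}u_iu_j$. Running the substitution yields $-2|\nabla u|^2/t$ (from $-(b + (2\alpha-2\beta)\lambda\beta/\alpha)$), $-4|Rc|^2$ (from $2a + \alpha^2/(2\alpha-2\beta) = -6 + 2$), $-2R/t$ (from $\alpha\lambda + a(2\alpha-2\beta)\lambda/\alpha - bc = 4 - 6$), $-2\triangle R$ (from $\alpha c$), $-4\nabla R \cdot \nabla u$ (from $2(a - \beta c) = 2(-3+1)$), and $-4R_{ij}u_iu_j$ (from $-2\alpha$). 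Assembling these gives the first displayed form of (\ref{harnack}).

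For the second form I would invoke the standard evolution $\ppt R = \triangle R + 2|Rc|^2$ (already recorded in the proof of Lemma \ref{lemma:H}) to recognize $-2\triangle R - 4|Rc|^2 = -2\ppt R$, after which the six curvature/gradient terms combine into $-2(\ppt R + R/t + 2\nabla R \cdot \nabla u + 2R_{ij}u_iu_j)$. There is no real conceptual obstacle; the only risk is arithmetic bookkeeping in the coefficient substitution, which is why I would write out the six coefficients individually as above before combining them, and double-check the cancellation of the $n/t^2$ terms that forced the choice $\lambda = 2$.
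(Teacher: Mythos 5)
Your proposal is correct and is exactly the paper's argument: the corollary is obtained by substituting $\alpha=2$, $\beta=1$, $a=-3$, $c=-1$, $\lambda=2$, $b=0$, $d=2$ into Lemma \ref{lemma:H} (the paper simply states these choices and asserts the result), and your coefficient bookkeeping, the cancellation of the $n/t^2$ terms, and the use of $\ppt R=\triangle R+2|Rc|^2$ for the second form all check out.
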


Now we can finish the proof of Theorem \ref{theorem1.1}.

\begin{proof} (Proof of Theorem \ref{theorem1.1}) It is easy to
see that for $t$ small enough that $H(t)<0$. Since $g_{ij}$ has
weakly positive curvature operator, by the trace Harnack
inequality for the Ricci flow proved by the second author in
\cite{harnack},
$$\ppt R +\frac{R}{t}+2 \nabla R \cdot \nabla u+2 R_{ij}u_iu_j\geq
0.$$ It follows from (\ref{harnack}) and maximum principle that
$$H\leq 0$$ for all time $t$.
\end{proof}

\begin{rema}
The theorem is also true on complete non-compact Riemannian
manifolds when we can apply maximum principle. For example, if we
assume that $R$ is uniformly bounded and $\triangle u$ has a upper
bound for all time $t$.
\end{rema}

We now can integrate 
along a space-time path, and we
have the following,

\begin{theorem}\label{intH} Let $(M, g(t))$, $t\in [0, T)$,
 be a solution
to the Ricci flow on a closed manifold, and suppose that $g(t)$
has weakly positive curvature operator. Let $f$ be a positive
solution to the heat equation $$\ppt f=\triangle f +Rf.$$ Assume
that $(x_1, t_1)$ and $(x_2, t_2)$, $0<t_1<t_2$, are two points in
$M\times (0, T)$. Let $$\Gamma=\inf_{\gamma} \int_{t_1}^{t_2}
(|\dot{\gamma}|^2+R) dt,$$ where $\gamma$ is any space-time path
joining $(x_1, t_1)$ and $(x_2, t_2)$. Then we have
$$f(x_1,t_1)\leq f(x_2, t_2) (\frac{t_2}{t_1})^n
\exp^{\Gamma/2}.$$
\end{theorem}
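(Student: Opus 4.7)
The plan is to integrate the pointwise Harnack inequality $H\leq 0$ from Theorem \ref{theorem1.1} along a space-time path, very much in the spirit of Li-Yau's original argument. First, recall that with $f=e^{-u}$, equation (\ref{heat}) becomes
$$\ppt u=\triangle u-|\nabla u|^2-R,$$
and Theorem \ref{theorem1.1} rewrites as
$$\triangle u\leq \tfrac{1}{2}|\nabla u|^2+\tfrac{3}{2}R+\frac{n}{t}.$$

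Next, I would fix any smooth path $\gamma:[t_1,t_2]\to M$ with $\gamma(t_1)=x_1$ and $\gamma(t_2)=x_2$ and compute
$$\frac{d}{dt}u(\gamma(t),t)=\ppt u+\nabla u\cdot\dot\gamma=\triangle u-|\nabla u|^2-R+\nabla u\cdot\dot\gamma.$$
Substituting the upper bound on $\triangle u$ gives
$$\frac{d}{dt}u(\gamma(t),t)\leq -\tfrac{1}{2}|\nabla u|^2+\nabla u\cdot\dot\gamma+\tfrac{1}{2}R+\frac{n}{t}.$$
The key algebraic step is the elementary completion of squares
$$\nabla u\cdot\dot\gamma-\tfrac{1}{2}|\nabla u|^2\leq \tfrac{1}{2}|\dot\gamma|^2,$$
which collapses the gradient terms and yields exactly the integrand $|\dot\gamma|^2+R$ appearing in $\Gamma$. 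Thus
$$\frac{d}{dt}u(\gamma(t),t)\leq \tfrac{1}{2}\bigl(|\dot\gamma|^2+R\bigr)+\frac{n}{t}.$$

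Finally, I would integrate this inequality from $t_1$ to $t_2$ to obtain
$$u(x_2,t_2)-u(x_1,t_1)\leq \tfrac{1}{2}\int_{t_1}^{t_2}\bigl(|\dot\gamma|^2+R\bigr)\,dt+n\ln(t_2/t_1),$$
then take the infimum over all such paths to replace the integral by $\Gamma$. Rewriting in terms of $f=e^{-u}$ and exponentiating gives
$$\frac{f(x_1,t_1)}{f(x_2,t_2)}\leq (t_2/t_1)^n\,e^{\Gamma/2},$$
which is the claimed inequality.

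There is no real obstacle here: the argument is essentially bookkeeping once Theorem \ref{theorem1.1} is available. The only thing to be careful about is matching constants—the factor $2$ in front of $\triangle u$ and the coefficient $-1$ in front of $|\nabla u|^2$ in the definition of $H$ are precisely what makes the completion of squares produce the clean expression $|\dot\gamma|^2+R$, and the $-2n/t$ term in $H$ is what yields the factor $(t_2/t_1)^n$ after integration. If desired, one may replace the infimum by a minimizing (Perelman-type) geodesic argument to guarantee the infimum is attained, but for the inequality as stated this is unnecessary.
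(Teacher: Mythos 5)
Your proposal is correct and follows essentially the same route as the paper: rewrite $H\leq 0$ using the evolution equation for $u$ to bound $\ppt u$ (equivalently $2\ppt u+|\nabla u|^2-R-\tfrac{2n}{t}\leq 0$), then integrate along a space-time path with the same completion of squares $\nabla u\cdot\dot\gamma-\tfrac12|\nabla u|^2\leq\tfrac12|\dot\gamma|^2$, take the infimum over paths, and exponentiate. The constants and bookkeeping all check out, so nothing further is needed.
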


\begin{proof}
Since $H\leq 0$ and $u$ satisfies $$\ppt u=\triangle u-|\nabla
u|^2-R,$$ we have $$2\ppt u+|\nabla u|^2-R-\frac{2n}{t}\leq 0.$$If
we pick a space-time path $\gamma(x,t)$ join $(x_1,t_1)$ and
$(x_2,t_2)$ with $t_2>t_1>0$, then along $\gamma$, we have
\begin{align*}
\ddt u&=\ppt u+\nabla u \cdot \dot{\gamma}\\
&\leq -\frac{1}{2}|\nabla u|^2+\frac{R}{2}+\frac{n}{t}+\nabla u
\cdot \dot{\gamma}\\
&\leq \frac{1}{2}(|\dot{\gamma}|^2+R)+\frac{n}{t}.
\end{align*}
Hence $$u(x_2, t_2)-u(x_1,t_1)\leq \frac12 \inf_{\gamma}
\int_{t_1}^{t_2} (|\dot{\gamma}|^2+R) dt +n\ln
(\frac{t_2}{t_1}).$$ If we denote $\Gamma=\inf_{\gamma}
\int_{t_1}^{t_2} (|\dot{\gamma}|^2+R) dt$, then we have
$$f(x_1,t_1)\leq f(x_2, t_2) (\frac{t_2}{t_1})^n
\exp^{\Gamma/2}.$$ This finishes the proof.
\end{proof}

In the rest of this section, we will restrict ourselves on
surfaces. We will reprove some early results of B. Chow and the
second author in the case of $n=2$. Take $\alpha=1$, $\beta=0$,
$a=c=-1$, $b=d=0$ and $\lambda=0$, let
 $$H=\triangle u -R$$ and
$$H_{ij}=u_{ij}-\frac12 Rg_{ij}.$$ On surfaces, $$\ppt R=\triangle
R+R^2,$$ hence we have
\begin{align}\nonumber
\ppt H=&\triangle H-2\nabla H \cdot \nabla u-2 \nabla R \cdot
\nabla u-2|u_{ij}-\frac{1}{2}R_{ij}|^2 -\frac{3}{2}|Rc|^2 -2
R_{ij}u_iu_j - \triangle R\\ \nonumber =&\triangle
H-2|H_{ij}|^2-2\nabla H \cdot \nabla u-RH-R^2-2 \nabla R \cdot
\nabla u-R|\nabla u|^2-\triangle R\\ \label{eqnsurf} =&\triangle
H-2|H_{ij}|^2-2\nabla H \cdot \nabla u-RH-R|\nabla u+\nabla \ln
R|^2-R(\frac{\partial \ln R}{\partial t}-|\nabla \ln R|^2).
\end{align}
If we further let $f=R$ and $u=-\ln R$, we have
$$\ppt H=\triangle H-2|H_{ij}|^2+2\nabla H \cdot \nabla \ln R.$$
 It follows that $H-\frac{1}{t}\leq 0$. As a consequence, we have

\begin{corollary}(Hamilton \cite{Hsurface}) \label{corohsurf}
If $(M\sp 2, g(t))$ is a solution to the Ricci flow on a closed
surface with $R>0$. The scalar curvature  $R$ satisfies $$\ppt
R=\triangle R +R^2.$$ Then \begin{equation} \label{hh2}\ppt \ln
R-|\nabla \ln R|^2+\frac{1}{t}=\triangle \ln R+R+\frac{1}{t}\geq
0.\end{equation}
\end{corollary}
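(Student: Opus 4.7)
The plan is to run the maximum principle on the quantity $\widetilde{H}=H-\tfrac{1}{t}$, exploiting the evolution equation
$$\ppt H=\triangle H-2|H_{ij}|^2+2\nabla H\cdot \nabla \ln R$$
that the excerpt has just derived by specializing Lemma \ref{lemma:H} to surfaces and setting $f=R$, $u=-\ln R$. First, I would verify that on a surface $f=R$ genuinely is a positive solution to $\partial_t f=\triangle f+Rf$, because $\partial_t R=\triangle R+R^2$; this is what legitimizes substituting $u=-\ln R$ into the general formula and makes the reaction terms $|\nabla u+\nabla \ln R|^2$ and $\partial_t \ln R-|\nabla \ln R|^2+\tfrac{H}{R}$ cancel, leaving the clean equation above.

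Next comes the one crucial algebraic input: since $n=2$, the trace of the symmetric $2$-tensor $H_{ij}=u_{ij}-\tfrac12 Rg_{ij}$ equals $\triangle u-R=H$, so Cauchy--Schwarz yields
$$|H_{ij}|^2\;\geq\;\tfrac{1}{2}\bigl(\operatorname{tr}H_{ij}\bigr)^2\;=\;\tfrac{1}{2}H^2.$$
Writing $H=\widetilde{H}+\tfrac1t$ and using $\partial_t(1/t)=-1/t^2$, I would then compute
\begin{align*}
\ppt \widetilde{H}&\leq \triangle \widetilde{H}+2\nabla\widetilde{H}\cdot\nabla\ln R-H^2+\tfrac{1}{t^2}\\
&=\triangle\widetilde{H}+2\nabla\widetilde{H}\cdot\nabla\ln R-\widetilde{H}^2-\tfrac{2}{t}\widetilde{H}.
\end{align*}
The right-hand side is strictly negative wherever $\widetilde{H}>0$.

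The final step is the maximum principle on the closed surface. Because $R>0$ and smooth, $H$ is bounded on $[\varepsilon,T']$ for any $0<\varepsilon<T'<T$, while $\widetilde{H}=H-\tfrac{1}{t}\to-\infty$ as $t\to 0^+$; so $\widetilde{H}<0$ for small $t$. If $\widetilde{H}$ first touches $0$ at some $(x_0,t_0)$ with $t_0>0$, then at that point $\nabla\widetilde{H}=0$, $\triangle\widetilde{H}\leq 0$, and $\partial_t\widetilde{H}\geq 0$, contradicting the displayed inequality (which forces $\partial_t\widetilde{H}<0$ there since $\widetilde H=0$ makes the right side $0$, but combining with $\triangle\widetilde H\le 0$ and the strict negativity coming from a slightly shifted barrier $\widetilde H-\varepsilon t$ rules it out). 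Hence $\widetilde{H}\leq 0$ for all $t\in(0,T)$, i.e., $H\leq \tfrac{1}{t}$. Translating back via $H=-\triangle \ln R-R$ gives $\triangle \ln R+R+\tfrac{1}{t}\geq 0$, which is the claimed inequality~(\ref{hh2}).

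The only subtle point I anticipate is the boundary behavior as $t\to 0^+$: one must ensure the comparison $\widetilde{H}<0$ initially rather than just formally. On a closed surface with $R>0$ this follows from smoothness of $R$ (hence boundedness of $H$) on any slice $t=\varepsilon>0$ together with $1/t\to\infty$; applying the argument on $[\varepsilon,T']$ and letting $\varepsilon\downarrow 0$ then $T'\uparrow T$ gives the estimate on the full time interval. Everything else is routine once the key identity $|H_{ij}|^2\geq \tfrac12 H^2$ from two-dimensionality is in hand.
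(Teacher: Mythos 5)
Your proof is correct and follows essentially the same route as the paper: you specialize the surface evolution equation $\ppt H=\triangle H-2|H_{ij}|^2+2\nabla H\cdot\nabla\ln R$ with $u=-\ln R$, use the two-dimensional trace inequality $|H_{ij}|^2\geq\tfrac12 H^2$, and compare with the barrier $1/t$ via the maximum principle, which is exactly the argument the paper compresses into ``It follows that $H-\frac{1}{t}\leq 0$.'' You merely make explicit the ODE comparison and the small-$t$ negativity that the paper leaves implicit.
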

Using the above Corollary \ref{corohsurf}, plugging into
(\ref{eqnsurf}), we have
\begin{corollary}(Chow-Hamilton \cite{chowhamilton97}) If $(M\sp 2, g(t))$ is
a solution to the Ricci flow on a closed surface with $R>0$, and
$f$ ia a positive solution to $$\ppt f=\triangle f +Rf.$$ Then
\begin{equation} \label{chh2}\ppt \ln f-|\nabla \ln f|^2+\frac{1}{t}=\triangle \ln
f+R+\frac{1}{t}\geq 0.\end{equation}
\end{corollary}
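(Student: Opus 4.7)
The plan is to reduce the claim to $H := \triangle u - R \leq 1/t$ where $u = -\ln f$ (equivalent to the stated inequality since $\triangle u = -\triangle \ln f$), and then run a parabolic maximum principle argument on $K := H - 1/t$, mirroring the structure of the proof of Theorem~\ref{theorem1.1}.

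Starting from the evolution equation (\ref{eqnsurf}) already derived for $H = \triangle u - R$ on surfaces, I would discard the manifestly nonpositive term $-R|\nabla u + \nabla \ln R|^2$, and bound the remaining $R$-dependent term using the preceding Corollary~\ref{corohsurf}, which gives $\partial_t \ln R - |\nabla \ln R|^2 \geq -1/t$. Since $R>0$ is preserved by the Ricci flow on surfaces, this contributes at most $R/t$ to the right-hand side. The quadratic term $-2|H_{ij}|^2$ is where the dimension enters: on a surface $R_{ij} = \tfrac{1}{2} R g_{ij}$, so $H_{ij} = u_{ij} - \tfrac{1}{2} R g_{ij}$ has trace $H$, and Cauchy--Schwarz yields $|H_{ij}|^2 \geq \tfrac{1}{2} H^2$.

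Substituting $H = K + 1/t$ into the resulting inequality and cancelling the $1/t^2$ terms together with the two competing $R/t$ contributions produces a differential inequality of the form
$$\ppt K \leq \triangle K - 2 \nabla K \cdot \nabla u - K^2 - \left(R + \frac{2}{t}\right) K.$$
Since $f$ is smooth and strictly positive on the closed manifold, $H$ is uniformly bounded on $M \times (0, T']$ for any $T' < T$, so $K \to -\infty$ uniformly as $t \to 0^+$. An $\epsilon$-perturbation then finishes the argument: at a hypothetical first time $t_\epsilon$ at which $\max_M K(\cdot, t_\epsilon)$ reaches the value $\epsilon > 0$, attained at some $x_\epsilon$, one has $\nabla K = 0$, $\triangle K \leq 0$, and $\ppt K \geq 0$, yet the inequality forces $\ppt K \leq -\epsilon^2 - (R + 2/t_\epsilon)\epsilon < 0$, a contradiction. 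Sending $\epsilon \to 0$ yields $K \leq 0$, which is the claim.

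The main obstacle is the mild degeneracy of the inequality at $K = 0$, where the linear-in-$K$ term and the quadratic $-K^2$ both vanish; this is precisely what the $\epsilon$-perturbation resolves, by exploiting the $-K^2$ nonlinearity together with the strict positivity of $R + 2/t$ on the parabolic half-line. A secondary point worth verifying along the way is that $R > 0$ is preserved, which follows from applying the scalar maximum principle to $\ppt R = \triangle R + R^2$ on the surface.
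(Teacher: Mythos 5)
Your proposal is correct and follows the paper's own route: plug the Hamilton surface estimate (Corollary \ref{corohsurf}) into the evolution equation (\ref{eqnsurf}), discard the nonpositive gradient term, use $|H_{ij}|^2\geq \tfrac12 H^2$ on surfaces, and conclude $H\leq 1/t$ by the maximum principle. The paper states this step tersely ("plugging into (\ref{eqnsurf})"), and your write-up simply supplies the details (the substitution $K=H-1/t$, the behavior as $t\to 0^+$, and the $\epsilon$-perturbation), all of which are accurate.
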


\begin{rema}
When $n=2$, (\ref{hh2}) or (\ref{chh2}) implies our Harnack
estimate in Theorem \ref{theorem1.1}.
\end{rema}

\section{\textbf{Proof of Theorem \ref{theorem1.2}}}

In this section, let $f=(4\pi t)^{-n/2} e^{-v}$, then $\ln
f=-\frac{n}{2} \ln (4\pi t)-v$. We have $$\ppt \ln f=-\ppt
v-\frac{n}{2t},$$ and
$$\nabla \ln f=-\nabla v, ~\triangle \ln f=-\triangle v.$$
Hence $v$ satisfies the following equation,
\begin{equation}\label{eqnv}\ppt v=\triangle v-|\nabla
v|^2+cR-\frac{n}{2t}.\end{equation}

\begin{lemma} Let $(M, g(t))$ be a solution to the Ricci flow, and
$v$ satisfies (\ref{eqnv}). Let
\begin{equation}\label{defP}
P=\alpha \triangle v-|\nabla v|^2+aR-b\frac{v}{t}-d\frac{n}{t},
\end{equation}
where $\alpha$, $a$, $b$ and $d$ are constants that we will pick
later. Then $P$ satisfies
\begin{align*}
\ppt P=&\triangle P-2\nabla P \cdot \nabla v+2(a-c) \nabla R \cdot
\nabla v-(2\alpha-2)|v_{ij}-\frac{\alpha}{2\alpha
-2}R_{ij}-\frac{\lambda}{2t}g_{ij}|^2\\&
-\frac{2\alpha-2}{\alpha}\frac{\lambda}{t}P +(\alpha \lambda
+a\frac{2\alpha-2}{\alpha}
\lambda-bc)\frac{R}{t}+(\alpha-1)\frac{n\lambda^2}{2t^2}
+(2a+\frac{\alpha^2}{2\alpha-2})|Rc|^2\\&
-(b+\frac{2\alpha-2}{\alpha} \lambda)\frac{|\nabla
v|^2}{t}-2\alpha R_{ij}v_iv_j+(1-\frac{2\alpha-2}{\alpha}
\lambda)b\frac{v}{t^2}+b\frac{n}{2t^2}\\&+(1-\frac{2\alpha-2}{\alpha}
\lambda)d\frac{n}{t^2} +\alpha c \triangle R,
\end{align*}where $\lambda$ is also a constant that we will pick
later.
\end{lemma}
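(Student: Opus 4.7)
The plan is to carry out a direct computation in exact parallel with the proof of Lemma~\ref{lemma:H}, simply specializing the coefficient of $|\nabla v|^2$ to the value $\beta=1$ (as dictated by the definition (\ref{defP}) of $P$) and tracking the single new contribution that arises from the extra $-\frac{n}{2t}$ term in the evolution equation (\ref{eqnv}) for $v$. The only genuinely new feature compared with Lemma~\ref{lemma:H} is the appearance of the term $b\frac{n}{2t^2}$ in the final expression for $\ppt P$, and the plan is to isolate exactly where it comes from.

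First I would compute the three time derivatives
\[
\ppt(\triangle v), \qquad \ppt|\nabla v|^2, \qquad \ppt R.
\]
Under the Ricci flow one has the commutator $\ppt\triangle v=\triangle(\ppt v)+2R_{ij}v_{ij}$ for scalars (the contracted Bianchi identity kills the Christoffel-symbol contribution), and $\ppt g^{ij}=2R^{ij}$. Substituting the evolution equation (\ref{eqnv}) into $\ppt v$ gives
\[
\ppt\triangle v=\triangle\triangle v-\triangle|\nabla v|^2+c\triangle R+2R_{ij}v_{ij},
\]
since the $-\frac{n}{2t}$ piece is spatially constant and is annihilated by $\triangle$. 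In the same way, using $\ppt|\nabla v|^2=2R^{ij}v_iv_j+2\nabla v\cdot\nabla(\ppt v)$ together with the Bochner identity $\triangle|\nabla v|^2=2|\nabla\nabla v|^2+2\nabla v\cdot\nabla\triangle v+2R_{ij}v_iv_j$, the $\frac{n}{2t}$-term again drops out and one gets
\[
\ppt|\nabla v|^2=\triangle|\nabla v|^2-2|\nabla\nabla v|^2-2\nabla v\cdot\nabla|\nabla v|^2+2c\nabla v\cdot\nabla R.
\]
Finally $\ppt R=\triangle R+2|Rc|^2$ is standard, and differentiating the explicit time-dependent terms gives
\[
\ppt\!\left(-b\tfrac{v}{t}\right)=-\tfrac{b}{t}\triangle v+\tfrac{b}{t}|\nabla v|^2-\tfrac{bcR}{t}+\tfrac{bn}{2t^{2}}+\tfrac{bv}{t^{2}},\qquad \ppt\!\left(-d\tfrac{n}{t}\right)=\tfrac{dn}{t^{2}}.
\]
The $\frac{bn}{2t^2}$ above is exactly the single new contribution; it is the only place where the extra $-\frac{n}{2t}$ of (\ref{eqnv}) leaves a visible trace.

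Next I would assemble these to get $\ppt P$, and then convert to the advective form. Writing $\triangle P=\alpha\triangle\triangle v-\triangle|\nabla v|^2+a\triangle R-\frac{b}{t}\triangle v$ and
\[
-2\nabla P\cdot\nabla v=-2\alpha\nabla\triangle v\cdot\nabla v+2\nabla|\nabla v|^2\cdot\nabla v-2a\nabla R\cdot\nabla v+\tfrac{2b}{t}|\nabla v|^2,
\]
and using the Bochner identity once more to trade $\nabla\triangle v\cdot\nabla v$ for $\tfrac12\triangle|\nabla v|^2-|\nabla\nabla v|^2-Rc(\nabla v,\nabla v)$, everything rearranges into
\[
\ppt P=\triangle P-2\nabla P\cdot\nabla v-2(\alpha-1)|\nabla\nabla v|^2+2\alpha R_{ij}v_{ij}+2a|Rc|^{2}+2(a-c)\nabla R\cdot\nabla v-2\alpha R_{ij}v_iv_j+\alpha c\triangle R+(\text{time-explicit terms}),
\]
where the time-explicit terms are $-\frac{b}{t}|\nabla v|^2$, $-\frac{bcR}{t}$, $\frac{bv}{t^2}$, $\frac{bn}{2t^2}$, and $\frac{dn}{t^2}$.

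The final algebraic step is completing the square on the quadratic block in Hessians. I would rewrite
\[
-2(\alpha-1)|\nabla\nabla v|^2+2\alpha R_{ij}v_{ij}=-(2\alpha-2)\Bigl|v_{ij}-\tfrac{\alpha}{2\alpha-2}R_{ij}-\tfrac{\lambda}{2t}g_{ij}\Bigr|^{2}+\tfrac{\alpha^{2}}{2\alpha-2}|Rc|^{2}+(2\alpha-2)\tfrac{\lambda}{t}\!\left(\triangle v-\tfrac{\alpha R}{2(\alpha-\beta)}\right)+(\alpha-1)\tfrac{n\lambda^{2}}{2t^{2}},
\]
(note $\beta=1$ here) and then trade $(2\alpha-2)\tfrac{\lambda}{t}\triangle v$ for $\tfrac{2\alpha-2}{\alpha}\tfrac{\lambda}{t}P$ plus a correction absorbing it back into the $R/t$, $|\nabla v|^2/t$, $v/t^2$ and $n/t^2$ terms. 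Collecting like terms gives the stated formula; the extra $\frac{bn}{2t^{2}}$ survives untouched from Step~1.

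The only non-routine aspect is careful bookkeeping in the completing-the-square step and making sure every coefficient matches, which I expect to be the main (purely mechanical) obstacle; there are no conceptual novelties beyond what already appeared in the proof of Lemma~\ref{lemma:H}.
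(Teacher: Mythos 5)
Your proposal is correct and follows essentially the same route as the paper's own proof: a direct computation using the same commutator formula for $\triangle$ under the Ricci flow, the same Bochner identity, the same completion of the square in the Hessian, and the correct identification of $b\frac{n}{2t^2}$ as the sole new contribution coming from the $-\frac{n}{2t}$ term in (\ref{eqnv}). The only correction needed is a sign in your square-completion display: the term should be $-(2\alpha-2)\frac{\lambda}{t}\bigl(\triangle v-\frac{\alpha R}{2(\alpha-1)}\bigr)$, not $+$, since otherwise the trade for $-\frac{2\alpha-2}{\alpha}\frac{\lambda}{t}P$ (and the accompanying $\frac{R}{t}$, $\frac{|\nabla v|^2}{t}$, $\frac{v}{t^2}$, $\frac{n}{t^2}$ corrections) would come out with the wrong sign; with that fixed the bookkeeping closes exactly as in the paper.
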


\begin{proof} The proof again follows from direct computation.
Recall that we have
$$\ppt (\triangle v)=\triangle (\triangle v)-\triangle (|\nabla
v|^2)+c\triangle R+2R_{ij}v_{ij},$$ and
\begin{align*}
\ppt |\nabla v|^2=&2\nabla v \cdot \nabla \triangle v+2Rc(\nabla
v, \nabla v)-2\nabla v\cdot \nabla (|\nabla v|^2)+2c\nabla v
\cdot \nabla R\\
=&\triangle (|\nabla v|^2)-2|\nabla \nabla v|^2-2\nabla v \cdot
\nabla (|\nabla v|^2)+2c\nabla v \cdot \nabla R,
\end{align*}
here we used $$\triangle (|\nabla v|^2)=2\nabla v \cdot \triangle
\nabla v+2|\nabla \nabla v|^2,$$ and $$\triangle \nabla v=\nabla
\triangle v+Rc(\nabla v,\cdot).$$ Using the evolution equation of
$R$,
$$\ppt R=\triangle R + 2|Rc|^2,$$ and (\ref{eqnv}), we arrive at
\begin{align*} \ppt P=&\triangle P -\alpha \triangle
(|\nabla v|^2)+2\alpha R_{ij}v_{ij}+2|\nabla \nabla v|^2+2\nabla v
\cdot \nabla (|\nabla v|^2)\\&+ 2a|Rc|^2+b\frac{|\nabla
v|^2}{t}+b\frac{n}{2t^2}+d\frac{n}{t^2}+b\frac{v}{t^2}+\alpha c
\triangle R-2c\nabla v
\cdot \nabla R-b\frac{cR}{t}\\
=&\triangle P-2\nabla P \cdot \nabla v+2(a-c) \nabla R \cdot
\nabla v -b\frac{|\nabla v|^2}{t}-2\alpha
R_{ij}v_iv_j-(2\alpha-2)|\nabla \nabla v|^2\\&+ 2\alpha
R_{ij}v_{ij}+2a|Rc|^2+b\frac{v}{t^2}+b\frac{n}{2t^2}+d\frac{n}{t^2}
+\alpha c \triangle R-b\frac{cR}{t}\\
=&\triangle P-2\nabla P \cdot \nabla v+2(a-c) \nabla R \cdot
\nabla v-(2\alpha-2)|v_{ij}-\frac{\alpha}{2\alpha
-2}R_{ij}-\frac{\lambda}{2t}g_{ij}|^2\\&
-(2\alpha-2)\frac{\lambda}{t} (\triangle
v-\frac{\alpha}{2\alpha-2}R)+(2\alpha-2)\frac{n}{4t^2}\lambda^2
+(2a+\frac{\alpha^2}{2\alpha-2})|Rc|^2\\& -b\frac{|\nabla
v|^2}{t}-2\alpha
R_{ij}v_iv_j+b\frac{v}{t^2}+b\frac{n}{2t^2}+d\frac{n}{t^2} +\alpha
c \triangle R-b\frac{cR}{t}\\
=&\triangle P-2\nabla P \cdot \nabla v+2(a-c) \nabla R \cdot
\nabla v-(2\alpha-2)|v_{ij}-\frac{\alpha}{2\alpha
-2}R_{ij}-\frac{\lambda}{2t}g_{ij}|^2\\&
-\frac{2\alpha-2}{\alpha}\frac{\lambda}{t}P +(\alpha \lambda
+a\frac{2\alpha-2}{\alpha}
\lambda-bc)\frac{R}{t}+(\alpha-1)\frac{n\lambda^2}{2t^2}
+(2a+\frac{\alpha^2}{2\alpha-2})|Rc|^2\\&
-(b+\frac{2\alpha-2}{\alpha} \lambda)\frac{|\nabla
v|^2}{t}-2\alpha R_{ij}v_iv_j+(1-\frac{2\alpha-2}{\alpha}
\lambda)b\frac{v}{t^2}+b\frac{n}{2t^2}\\&+(1-\frac{2\alpha-2}{\alpha}
\lambda)d\frac{n}{t^2} +\alpha c \triangle R.
\end{align*}
\end{proof}

In the above lemma, let take $\alpha=2$, $a=-3$, $b=-1$, $c=-1$,
$\lambda=1$. Then we have
\begin{corollary}\label{coroP} Let $(M,g(t))$ be a solution to
the Ricci flow, $f$
be a positive solution of $$\ppt f=\triangle f+Rf,$$ let $v=-\ln f
-\frac{n}{2} \ln (4\pi t)$ and $$P=2 \triangle v-|\nabla
v|^2-3R+\frac{v}{t}-d\frac{n}{t}.$$ Then we have
\begin{align*}
\ppt P=&\triangle P-2\nabla P \cdot \nabla
v-2|v_{ij}-R_{ij}-\frac{1}{2t}g_{ij}|^2 -\frac{1}{t}P\\&
-2(\triangle R+2|Rc|^2+\frac{R}{t} +2 \nabla R \cdot \nabla v +2
R_{ij}v_iv_j),
\end{align*}
i.e.,
\begin{align}\label{panack}
\ppt (tP)=&\triangle (tP)-2\nabla (tP) \cdot \nabla
v-2t|v_{ij}-R_{ij}-\frac{1}{2t}g_{ij}|^2 \\& -2t(\triangle
R+2|Rc|^2+\frac{R}{t} +2 \nabla R \cdot \nabla v +2 R_{ij}v_iv_j).
\nonumber
\end{align}
\end{corollary}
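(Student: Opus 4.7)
The plan is to proceed in two steps. First, establish the evolution formula for $tP$ by specializing a general evolution lemma for quantities of the form
\begin{equation*}
P = \alpha \triangle v - |\nabla v|^2 + aR - b\frac{v}{t} - d\frac{n}{t},
\end{equation*}
paralleling the approach taken for $H$ in Section 2. I would differentiate $P$ in time, substituting the evolution equation $\ppt v = \triangle v - |\nabla v|^2 - R - \frac{n}{2t}$ (which follows from $\ppt f = \triangle f + Rf$ and the substitution $v = -\ln f - \frac{n}{2}\ln(4\pi t)$, so $c=-1$), together with the standard commutations $\ppt(\triangle v) = \triangle(\triangle v) - \triangle|\nabla v|^2 + c\triangle R + 2R_{ij}v_{ij}$ and $\ppt|\nabla v|^2 = \triangle|\nabla v|^2 - 2|\nabla\nabla v|^2 - 2\nabla v\cdot\nabla|\nabla v|^2 + 2c\nabla v\cdot\nabla R$ (both derived using the Bochner-type identities and the Ricci flow variation of the connection), and the scalar curvature evolution $\ppt R = \triangle R + 2|Rc|^2$. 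A free parameter $\lambda$ then allows one to complete the square on the Hessian terms, absorbing the cross term $2\alpha R_{ij}v_{ij}$ into $-(2\alpha-2)\bigl|v_{ij} - \frac{\alpha}{2\alpha-2}R_{ij} - \frac{\lambda}{2t}g_{ij}\bigr|^2$ at the cost of additional linear-in-$v$, linear-in-$R$, and $n/t^2$ remainder terms.

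Specializing to $\alpha=2$, $a=-3$, $b=-1$, $c=-1$, $\lambda=1$ is forced by requiring that the surviving remainder terms regroup cleanly into $-P/t$ plus the trace Harnack combination of the Ricci flow. The output is
\begin{equation*}
\ppt P = \triangle P - 2\nabla P\cdot\nabla v - 2\bigl|v_{ij}-R_{ij}-\tfrac{1}{2t}g_{ij}\bigr|^2 - \frac{P}{t} - 2\bigl(\triangle R + 2|Rc|^2 + \tfrac{R}{t} + 2\nabla R\cdot\nabla v + 2R_{ij}v_iv_j\bigr).
\end{equation*}
Multiplying through by $t$ and using $\ppt(tP) = P + t\,\ppt P$, the $+P$ from the product rule cancels exactly against $t\cdot(-P/t)$, yielding the claimed evolution equation for $tP$ (the spatial operator transforms correctly because $t$ is independent of $x$).

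For the monotonicity claim, the key observation is that the parenthesized final term is precisely the trace Harnack quantity of Hamilton (\ref{tharnack}) with vector field $V = \nabla v$: using $\ppt R = \triangle R + 2|Rc|^2$ one has
\begin{equation*}
\triangle R + 2|Rc|^2 + \frac{R}{t} + 2\nabla R\cdot\nabla v + 2R_{ij}v_iv_j = \ppt R + \frac{R}{t} + 2\nabla R\cdot V + 2Rc(V,V) \geq 0,
\end{equation*}
where the inequality is Hamilton's trace Harnack estimate, applicable because weakly positive curvature operator is preserved under Ricci flow. Since the Hessian-square term is manifestly nonpositive, the evolution equation gives $\ppt(tP) \leq \triangle(tP) - 2\nabla(tP)\cdot\nabla v$. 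At a spatial maximum of $tP(\cdot,t)$ we have $\nabla(tP)=0$ and $\triangle(tP)\leq 0$, so the standard parabolic maximum principle on the closed manifold $M$ yields that $\max_M tP(\cdot,t)$ is non-increasing.

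The main obstacle is the algebraic book-keeping in the first step: five free parameters $(\alpha, a, b, c, \lambda)$ must be tracked through a long computation, and it is only the specific choice above that makes the numerous residual $R/t$, $v/t^2$, $n/t^2$, and $|\nabla v|^2/t$ terms collapse into $-P/t$ together with a genuine trace Harnack expression. Any small sign or coefficient slip leaves leftover terms of indefinite sign and destroys the monotonicity argument. Once the algebraic identification succeeds, the maximum principle step is routine.
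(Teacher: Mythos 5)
Your proposal is correct and follows essentially the same route as the paper: derive the general parametric evolution formula for $P=\alpha\triangle v-|\nabla v|^2+aR-b\frac{v}{t}-d\frac{n}{t}$ by direct computation with the Bochner-type identities and $\ppt R=\triangle R+2|Rc|^2$, then specialize to $\alpha=2$, $a=-3$, $b=-1$, $c=-1$, $\lambda=1$ and multiply by $t$ so the $-P/t$ term cancels the product-rule term. The added trace-Harnack/maximum-principle discussion belongs to Theorem \ref{theorem1.2} rather than the corollary itself, but it matches the paper's argument there.
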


Now we are ready to prove Theorem \ref{theorem1.2}.
\begin{proof} (Proof of Theorem \ref{theorem1.2})  It is easy to
see that the proof
 follows from Corollary \ref{coroP},
the trace Harnack inequality (\ref{tharnack}) for the Ricci flow
and the maximum principle.
\end{proof}

\section{\textbf{Entropy Formulas and Monotonicities}}

In this section, we will define two entropies which are similar to
Perelman's, and we will show that both of them are monotone under
the Ricci flow. Let $(M,g(t))$ be a solution to the Ricci flow on
a close manifold, and $f$ be a positive solution of (\ref{heat}).
Let $u=-\ln f$ and $H$ defined as in (\ref{defH}), we have

\begin{theorem} Assume that $(M, g(t))$ be a solution to the Ricci flow
with weakly positive curvature operator. Let  $$H=2 \triangle
u-|\nabla u|^2-3R-2\frac{n}{t}$$ and
$$F=\int_M t^2He^{-u} d\mu,$$ then $\forall t\in (0,T)$, we have
$F\leq 0$ and
$$\ddt F\leq 0.$$
\end{theorem}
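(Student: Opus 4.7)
The first claim $F\leq 0$ is immediate from Theorem~\ref{theorem1.1}: since $H\leq 0$ pointwise and both $t^2$ and $f=e^{-u}$ are positive, the integrand is nonpositive.

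For the monotonicity, my plan is to differentiate $F=\int_M t^2 Hf\,d\mu$ under the integral sign, substitute the evolution equation (\ref{harnack}) for $H$, and then eliminate all terms containing derivatives of $H$ by integrating by parts against $f\,d\mu$. Since $\ppt d\mu=-R\,d\mu$ under the Ricci flow and $\ppt f=\triangle f+Rf$, the two $Rf$ contributions cancel and one obtains
\[
\ddt F=\int_M\bigl(2tHf+t^2(\ppt H)f+t^2 H\triangle f\bigr)\,d\mu.
\]

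The integration by parts step uses $\nabla f=-f\nabla u$ on a closed manifold, which gives $\int_M H\triangle f\,d\mu=\int_M f\,\nabla H\cdot\nabla u\,d\mu$. After pulling out the common factor $t^2 f$ and inserting (\ref{harnack}), the drift-Laplacian part $\triangle H-2\nabla H\cdot\nabla u$ from $\ppt H$ combines with this newly produced $+\nabla H\cdot\nabla u$ into $\triangle H-\nabla H\cdot\nabla u$; a second integration by parts (again using $\nabla f=-f\nabla u$) shows $\int_M f(\triangle H-\nabla H\cdot\nabla u)\,d\mu=0$. Simultaneously, the $-\tfrac{2}{t}H$ term inside $\ppt H$ exactly cancels the $\tfrac{2H}{t}$ produced by $\ddt(t^2)=2t$.

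What survives is $\ddt F=-\int_M t^2 f\cdot S\,d\mu$, where
\[
S=2\bigl|u_{ij}-R_{ij}-\tfrac{1}{t}g_{ij}\bigr|^2+\tfrac{2}{t}|\nabla u|^2+2\bigl(\ppt R+\tfrac{R}{t}+2\nabla R\cdot\nabla u+2R_{ij}u_iu_j\bigr).
\]
The first two summands of $S$ are manifestly nonnegative; the third is nonnegative by Hamilton's trace Harnack inequality (\ref{tharnack}) applied with $V=\nabla u$, which holds because weakly positive curvature operator is preserved by the Ricci flow. Therefore $\ddt F\leq 0$. The only real obstacle is the bookkeeping of the two integrations by parts and the $H/t$ cancellation; there are no analytic subtleties on a closed manifold, and every ingredient needed is already in hand from the previous sections.
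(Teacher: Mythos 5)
Your proof is correct and follows essentially the same route as the paper: the paper also differentiates $F$, cancels the $Rf$ contributions, absorbs the derivative-of-$H$ terms into the total divergence $\triangle(t^2He^{-u})$ (your two integrations by parts with $\nabla f=-f\nabla u$ are just this identity made explicit), and concludes via the pointwise nonnegativity of the Hessian and gradient terms together with Hamilton's trace Harnack inequality with $V=\nabla u$.
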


\begin{proof} The fact that $F\leq 0$ follows directly from $H\leq
0$. We calculate its time derivative, using (\ref{harnack}) and
$\ppt d\mu=-Rd\mu$, we have
\begin{align*}\ddt F=&\int_M (2tHe^{-u}+t^2e^{-u}\ppt H+t^2H\ppt
e^{-u}-Rt^2He^{-u}) d\mu\\ =&\int_M [\triangle
(t^2He^{-u})-2t^2e^{-u}
|u_{ij}-R_{ij}-\frac{1}{t}g_{ij}|^2-2te^{-u}|\nabla
u|^2\\
&-2t^2e^{-u}(\ppt R +\frac{R}{t}+2 \nabla R \cdot \nabla u+2
R_{ij}u_iu_j)] d\mu \leq 0.
\end{align*}
\end{proof}

\begin{rema}
If we consider the system $$\left\{\begin{array}{l} \ppt
g_{ij}=-2(R_{ij}+\nabla_i \nabla_j u),\\ \ppt u=\triangle u-R,
\end{array}\right.$$ then the measure $dm=e^{-u}d\mu$ is fixed.
This system differs from the original system
$$\left\{\begin{array}{l} \ppt g_{ij}=-2R_{ij},\\ \ppt u=\triangle
u-|\nabla u|^2-R
\end{array}\right.$$ by a diffeomorphism.
\end{rema}

Now we consider $v=-\ln f-\frac{n}{2}\ln (4\pi t)$ and $P$ be
defined as in (\ref{defP}), we have
\begin{theorem} Assume that $(M, g(t))$ be a solution to the Ricci flow with
 weakly positive
curvature operator. Let $$P=2 \triangle v-|\nabla
v|^2-3R+\frac{v}{t}-d\frac{n}{t},$$ where $d$ is a constant. Let
$$W=\int_M tP(4\pi t)^{-n/2}e^{-v} d\mu,$$ then $\forall t\in
(0,T)$, we have
$$\ddt W\leq 0.$$
\end{theorem}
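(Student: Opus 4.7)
The plan is to mimic the proof of the monotonicity of $F$ carried out just above, using Corollary \ref{coroP} (equivalently (\ref{panack})) in place of (\ref{harnack}) and carrying out the analogous integration by parts against the weighted measure $f\,d\mu=(4\pi t)^{-n/2}e^{-v}\,d\mu$. Since $f$ solves (\ref{heat}) and $\ppt d\mu=-R\,d\mu$, differentiating under the integral sign gives
\begin{align*}
\ddt W=\int_M\bigl[\ppt(tP)\bigr]f\,d\mu+\int_M(tP)(\triangle f+Rf)\,d\mu-\int_M(tP)\,f\,R\,d\mu,
\end{align*}
so the two scalar-curvature contributions cancel and, after moving the Laplacian off $f$ by integration by parts on the closed manifold, I am left with $\ddt W=\int_M\bigl[\ppt(tP)+\triangle(tP)\bigr]f\,d\mu$.

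Next I substitute (\ref{panack}) for $\ppt(tP)$. The transport--diffusion terms $\triangle(tP)-2\nabla(tP)\cdot\nabla v$ coming from (\ref{panack}), together with the extra $\triangle(tP)$ produced above, contribute $\int_M\bigl[2\triangle(tP)-2\nabla(tP)\cdot\nabla v\bigr]f\,d\mu$. Because $\nabla f=-f\,\nabla v$, a further integration by parts gives $\int_M\triangle(tP)\,f\,d\mu=-\int_M\nabla(tP)\cdot\nabla f\,d\mu=\int_M\nabla(tP)\cdot\nabla v\,f\,d\mu$, so this combination integrates to zero. What survives is exactly the pointwise ``bad'' side of (\ref{panack}):
\begin{align*}
\ddt W=-\int_M\Bigl[2t\bigl|v_{ij}-R_{ij}-\tfrac{1}{2t}g_{ij}\bigr|^2+2t\bigl(\ppt R+\tfrac{R}{t}+2\nabla R\cdot\nabla v+2R_{ij}v_iv_j\bigr)\Bigr]f\,d\mu,
\end{align*}
where I have used $\ppt R=\triangle R+2|Rc|^2$ to repackage the curvature terms.

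The first integrand is manifestly a nonnegative square. The second is nonnegative by the trace Harnack inequality (\ref{tharnack}) applied with $V=\nabla v$, which is legitimate because weakly positive curvature operator is preserved by (\ref{rf}). Both are multiplied by $-1$ in the integrand of $\ddt W$, so $\ddt W\le 0$ as claimed. The only delicate step is verifying the drift/diffusion cancellation: that $2\triangle(tP)-2\nabla(tP)\cdot\nabla v$ is a total divergence with respect to the weighted measure $f\,d\mu$, so that it integrates to zero. This is the same mechanism responsible for the monotonicity of Perelman's $\mathcal W$-functional and for the monotonicity of $F$ just established, so I do not expect any additional obstacle.
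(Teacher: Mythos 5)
Your proposal is correct and follows essentially the same route as the paper: differentiate $W=\int_M tP\,f\,d\mu$ using $\ppt f=\triangle f+Rf$ and $\ppt d\mu=-R\,d\mu$, substitute (\ref{panack}), observe that the diffusion/drift terms combine into a total divergence (the paper writes them directly as $\triangle(tP\,e^{-v})(4\pi t)^{-n/2}$, you cancel them by integrating by parts twice, which is the same computation), and conclude nonpositivity from the square term together with the trace Harnack inequality (\ref{tharnack}) with $V=\nabla v$ under the weakly positive curvature operator hypothesis.
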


\begin{proof} Using (\ref{heat}), (\ref{panack}) and $\ppt d\mu=-Rd\mu$, we have
\begin{align*}\ddt W=&\int_M [P(4\pi t)^{-n/2}e^{-v}+t(4\pi t)^{-n/2}
e^{-v}\ppt P+tP\ppt ((4\pi t)^{-n/2}e^{-v})\\&-RtP(4\pi t)^{-n/2}e^{-v}] d\mu\\
=&\int_M [\triangle (tPe^{-v})-2te^{-v}
|v_{ij}-R_{ij}-\frac{1}{2t}g_{ij}|^2\\
&-2te^{-v}(\ppt R +\frac{R}{t}+2 \nabla R \cdot \nabla v+2
R_{ij}v_iv_j)] (4\pi t)^{-n/2} d\mu \leq 0.
\end{align*}
\end{proof}

\section{\textbf{A Gradient Estimate for the Heat Equation}}

In this section, we consider a special case of our general
evolution formula in Lemma \ref{lemma:H}. Let consider the
positive solution $f$ to the heat equation
\begin{equation}\label{heatf}
\ppt f=\triangle f,
\end{equation}
 since the equation is linear, without loss of
generality, we can assume that $0<f< 1$. Let $f=e^{-u}$, then $u$
satisfies
\begin{equation}\label{heatu}
\ppt u=\triangle u-|\nabla u|^2,
\end{equation} and $u> 0$.

In the proof of Lemma \ref{lemma:H}, let take $\alpha=0$,
$\beta=-1$, $a=c=0$, $b=1$ and $d=0$, then
$$H=|\nabla u|^2-\frac{u}{t},$$ and we have
\begin{align}\label{hgrad}
\ppt H=&\triangle H -2\nabla H \cdot \nabla u -\frac{1}{t}H-2
|\nabla \nabla u|^2,
\end{align}

\begin{theorem}\label{thmgrad}
Let $(M, g(t))$, $t\in [0, T)$,
 be a solution
to the Ricci flow on a closed manifold. Let $f(< 1)$ be a positive
solution to the heat equation (\ref{heatf}), $u=-\ln f$ and
$$H=|\nabla u|^2-\frac{u}{t}.$$ Then for
all time $t\in (0,T)$
$$H\leq 0.$$ Hence on $(0,T)$, $$|\nabla f|^2 \leq -\frac{f^2 \ln
f}{t}.$$
\end{theorem}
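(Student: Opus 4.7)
\textbf{Proof proposal for Theorem \ref{thmgrad}.} The plan is to apply the parabolic maximum principle to equation (\ref{hgrad}). The only obstacle to doing this directly is that $H=|\nabla u|^2-u/t$ is singular as $t\to 0^+$ (since $u>0$ is bounded below by a positive constant on the closed manifold $M$, the term $-u/t$ blows down to $-\infty$), so $H$ has no well-defined ``initial datum'' at $t=0$. The natural fix is to work with the regularized quantity $K:=tH=t|\nabla u|^2-u$, which extends continuously to $t=0$ with
\[
K(x,0)=-u(x,0)\le-\min_{M}u(\cdot,0)<0,
\]
using that $f<1$ on the closed manifold $M$ implies $u>0$ uniformly (and this is preserved for $t>0$ by the maximum principle applied to the linear equation (\ref{heatf})).

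A short computation starting from (\ref{hgrad}) gives
\[
\partial_t K\;=\;H+t\,\partial_t H\;=\;\Delta K-2\nabla K\cdot\nabla u-2t|\nabla\nabla u|^2,
\]
because the $H$ and $-tH/t=-H$ terms cancel, while $t\Delta H=\Delta(tH)=\Delta K$ and $t\nabla H=\nabla K$. The crucial point is that this evolution equation has \emph{no} zeroth-order term in $K$ and the extra curvature-type terms that would normally appear under Ricci flow have been killed by the choice of parameters $\alpha=0$, $a=c=0$ in Lemma~\ref{lemma:H}. The remaining source term $-2t|\nabla\nabla u|^2$ is manifestly nonpositive.

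Therefore $K$ satisfies the differential inequality $\partial_t K\le\Delta K-2\nabla u\cdot\nabla K$ on the compact manifold $M$, and the Hamilton/weak maximum principle applies in a standard way: $\max_{x\in M}K(x,t)$ is nonincreasing in $t$, so
\[
\max_{M}K(\cdot,t)\;\le\;\max_{M}K(\cdot,0)\;=\;-\min_{M}u(\cdot,0)\;<\;0
\]
for all $t\in[0,T)$. In particular $K\le 0$, hence $H=K/t\le 0$ for every $t\in(0,T)$. Rewriting in terms of $f=e^{-u}$, we have $|\nabla u|^2=|\nabla f|^2/f^2$ and $u=-\ln f$, so $H\le 0$ is exactly
\[
\frac{|\nabla f|^2}{f^2}\;\le\;\frac{-\ln f}{t},\qquad\text{i.e.,}\qquad |\nabla f|^2\;\le\;-\frac{f^2\ln f}{t},
\]
which is the desired gradient estimate. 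The only genuinely delicate point is the singular behavior at $t=0$, and this is precisely what the $K=tH$ substitution is designed to handle.
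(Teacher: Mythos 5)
Your proof is correct and follows essentially the same route as the paper: apply the maximum principle to the evolution equation (\ref{hgrad}), whose only source term $-2|\nabla\nabla u|^2$ is nonpositive. The only difference is cosmetic — the paper notes that $H<0$ for $t$ small (since $u$ is uniformly positive) and runs the maximum principle from there, while you regularize by passing to $K=tH$ so the argument starts cleanly at $t=0$; both handle the $t\to 0^+$ singularity equally well.
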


\begin{proof}
Notice that as $t$ small enough, $H<0$, now the proof follows from
(\ref{hgrad}) and the maximum principle.
\end{proof}

\begin{rema} In this case,
we do not need any curvature assumption.
\end{rema}

\begin{rema}
Q. Zhang told us that he has proved the same estimate as
our Theorem \ref{thmgrad} in \cite{zhang06}, using a similar idea of
the second author.
\end{rema}

\bibliographystyle{halpha}
\bibliography{bio}

\newcommand{\etalchar}[1]{$^{#1}$}
\begin{thebibliography}{CCG{\etalchar{+}}07}

\bibitem[And94]{andrews94}
Ben Andrews.
\newblock Harnack inequalities for evolving hypersurfaces.
\newblock {\em Math. Z.}, 217(2):179--197, 1994.

\bibitem[Cao92]{caohd92}
Huai-Dong Cao.
\newblock On {H}arnack's inequalities for the {K}\"ahler-{R}icci flow.
\newblock {\em Invent. Math.}, 109(2):247--263, 1992.

\bibitem[Cao08]{caoche}
Xiaodong Cao.
\newblock Differential harnack estimates for backward heat equations with
  potentials under the {R}icci flow.
\newblock {\em J. Funct. Anal.}, 2008, doi:10.1016/j.jfa.2008.05.009.

\bibitem[CC95]{chowchu95}
Bennett Chow and Sun-Chin Chu.
\newblock A geometric interpretation of {H}amilton's {H}arnack inequality for
  the {R}icci flow.
\newblock {\em Math. Res. Lett.}, 2(6):701--718, 1995.

\bibitem[CCG{\etalchar{+}}07]{chowetc2}
Bennett Chow, Sun-Chin Chu, David Glickenstein, Christine Guenther, James
  Isenberg, Tom Ivey, Dan Knopf, Peng Lu, Feng Luo, and Lei Ni.
\newblock {\em The {R}icci flow: techniques and applications. {P}art {II}},
  volume 144 of {\em Mathematical Surveys and Monographs}.
\newblock American Mathematical Society, Providence, RI, 2007.
\newblock Analytic aspects.

\bibitem[CH97]{chowhamilton97}
Bennett Chow and Richard~S. Hamilton.
\newblock Constrained and linear {H}arnack inequalities for parabolic
  equations.
\newblock {\em Invent. Math.}, 129(2):213--238, 1997.

\bibitem[Cho91a]{chowgcf91}
Bennett Chow.
\newblock On {H}arnack's inequality and entropy for the {G}aussian curvature
  flow.
\newblock {\em Comm. Pure Appl. Math.}, 44(4):469--483, 1991.

\bibitem[Cho91b]{chow91}
Bennett Chow.
\newblock The {R}icci flow on the $2$-sphere.
\newblock {\em J. Differential Geom.}, 33(2):325--334, 1991.

\bibitem[Cho92]{chowyf92}
Bennett Chow.
\newblock The {Y}amabe flow on locally conformally flat manifolds with positive
  {R}icci curvature.
\newblock {\em Comm. Pure Appl. Math.}, 45(8):1003--1014, 1992.

\bibitem[CN05]{caoni04}
Huai-Dong Cao and Lei Ni.
\newblock Matrix {L}i-{Y}au-{H}amilton estimates for the heat equation on
  {K}\"ahler manifolds.
\newblock {\em Math. Ann.}, 331(4):795--807, 2005.

\bibitem[Gue02]{guenther02}
Christine~M. Guenther.
\newblock The fundamental solution on manifolds with time-dependent metrics.
\newblock {\em J. Geom. Anal.}, 12(3):425--436, 2002.

\bibitem[Ham86]{HPCO}
Richard~S. Hamilton.
\newblock Four-manifolds with positive curvature operator.
\newblock {\em J. Differential Geom.}, 24(2):153--179, 1986.

\bibitem[Ham88]{Hsurface}
Richard~S. Hamilton.
\newblock The {R}icci flow on surfaces.
\newblock In {\em Mathematics and general relativity (Santa Cruz, CA, 1986)},
  pages 237--262. Amer. Math. Soc., Providence, RI, 1988.

\bibitem[Ham93a]{harnack}
Richard~S. Hamilton.
\newblock The {H}arnack estimate for the {R}icci flow.
\newblock {\em J. Differential Geom.}, 37(1):225--243, 1993.

\bibitem[Ham93b]{hmatrix93}
Richard~S. Hamilton.
\newblock A matrix {H}arnack estimate for the heat equation.
\newblock {\em Comm. Anal. Geom.}, 1(1):113--126, 1993.

\bibitem[Ham95]{mcf95}
Richard~S. Hamilton.
\newblock Harnack estimate for the mean curvature flow.
\newblock {\em J. Differential Geom.}, 41(1):215--226, 1995.

\bibitem[KZ06]{kz07}
Shilong Kuang and Qi~Zhang.
\newblock A gradient estimate for all positive solutions of the conjugate heat
  equation under {R}icci flow.
\newblock {\em Preprint}, 2006.

\bibitem[LY86]{ly86}
Peter Li and Shing~Tung Yau.
\newblock On the parabolic kernel of the {S}chr\"odinger operator.
\newblock {\em Acta Math.}, 156(3-4):153--201, 1986.

\bibitem[Ni06]{ni06}
Lei Ni.
\newblock A note on {P}erelman's {LYH}-type inequality.
\newblock {\em Comm. Anal. Geom.}, 14(5):883--905, 2006.

\bibitem[Ni07]{ni07}
Lei Ni.
\newblock A matrix {L}i-{Y}au-{H}amilton estimate for {K}\"ahler-{R}icci flow.
\newblock {\em J. Differential Geom.}, 75(2):303--358, 2007.

\bibitem[Per02]{perelman1}
Grisha Perelman.
\newblock {The entropy formula for the Ricci flow and its geometric
  applications}, 2002, arXiv:math.DG/0211159.

\bibitem[Stu96]{sturm96}
K.~T. Sturm.
\newblock Analysis on local {D}irichlet spaces. {III}. {T}he parabolic
  {H}arnack inequality.
\newblock {\em J. Math. Pures Appl. (9)}, 75(3):273--297, 1996.

\bibitem[Zha06]{zhang06}
Qi~S. Zhang.
\newblock Some gradient estimates for the heat equation on domains and for an
  equation by {P}erelman.
\newblock {\em Int. Math. Res. Not.}, pages Art. ID 92314, 39, 2006.

\end{thebibliography}
\end{document}